\tikzset{%
    symbol/.style={%
        ,draw=none
        ,every to/.append style={%
            edge node={node [sloped, allow upside down, auto=false]{$#1$}}}
    }
}
\def\presuper#1#2%
\newcommand{\Nn}{\mathbb{N}}
\newcommand{\Rr}{\mathbb{R}}
\newcommand{\Cc}{\mathbb{C}}
\newcommand{\Mor}{\mathrm{Mor}}
\newcommand{\Diff}{\mathrm{Diff}}	
\newcommand{\Man}{\mathrm{Man}}	
\newcommand{\Manec}{\mathrm{Man}_{\mathrm{ec}}}	
\newcommand{\ManC}{\mathrm{Man}_{\Cc}}		
\newcommand{\ManRA}{\mathrm{Man}_{\mathrm{RA}}}	
\newcommand{\ManAlg}{\mathrm{Man}_{\mathrm{Alg}}}	
\newcommand{\ManFr}{\mathrm{Man}_{\mathrm{Fr}}}	
\newcommand{\Top}{\mathrm{Top}}	
\newcommand{\Ee}{\mathbb{E}} 	
\newcommand{\Eeloc}{\mathbb{E}_{\mathrm{loc}}}
\newcommand{\Aa}{\mathbb{A}}	
\newcommand{\AG}{\Aa \G}	
\newcommand{\AGbi}{\AG_{\mathrm{bi}}}	
\newcommand{\AS}{\Aa \mathcal{S}}	
\newcommand{\Tau}{\mathcal{T}}		
\newcommand{\TauDopen}{\mathcal{T}_{\mathrm{D-open}}}
\newcommand{\Cech}{\mathrm{\check{C}}}	
\newcommand{\Sh}{\mathrm{Sh}}
\newcommand{\PhiPsi}{\mathrm{\presuper{\Phi}{\Psi}}}	
\newcommand{\id}{\mathrm{id}}
\renewcommand{\H}{\mathcal{H}} 
\newcommand{\G}{\mathcal{G}}
\newcommand{\F}{\mathcal{F}}
\renewcommand{\L}{\mathcal{L}} 
\newcommand{\X}{\mathcal{X}}	
\renewcommand{\P}{\mathcal{P}}		
\renewcommand{\O}{\mathcal{O}}
\newcommand{\U}{\mathcal{U}}
\newcommand{\Bun}[3]{#1 \ \rotatebox[origin=c]{90}{$\circlearrowleft$}\ #2 \ \rotatebox[origin=c]{90}{$\circlearrowright$}\ #3}
\newcommand{\pr}{\operatorname{pr}}
\newcommand{\iso}{\xrightarrow{\sim}}	
\newtheorem{Thm}{Theorem}[section]
\newtheorem{Lem}[Thm]{Lemma}
\newtheorem{Prop}[Thm]{Proposition}
\theoremstyle{definition}
\newtheorem{Def}[Thm]{Definition}
\newtheorem{Rem}[Thm]{Remark}
\newtheorem{Not}[Thm]{Notation}
\begin{document}
\setcounter{page}{1}


\title[Internal Absolute Geometry]{Internal Absolute Geometry I: Desingularization}

\author[Karsten Bohlen]{Karsten Bohlen}



\subjclass[2000]{Primary 18F40; Secondary 18E99.}

\keywords{internal absolute geometry, Grothendieck site, groupoid}



\begin{abstract}
We introduce an axiomatization of Grothendieck sites with additional structure, and we describe sheaves that reconstruct groupoids which are internal to the site structure. This setting applies to various concrete situations, where a Nash blowup of a singular space results in an almost regular foliation. It also potentially applies to various types of moduli spaces. The sheaf can encode candidate holonomy groupoids that desingularize such spaces. 
\end{abstract} 

\maketitle

\section{Introduction}

A singular foliation, i.e. a foliation with leaves of varying dimension, over a smooth manifold, possesses an associated holonomy groupoid. The construction furnishes a groupoid with smooth source fibers, the orbits of which align precisely with the leaves of the foliation. C. Ehresmann initially studied the holonomy groupoid \cite{ehresmann}. It was later extended to the regular foliation case by H. E. Winkelnkemper \cite{winkelnkemper} and to singular foliations by J. Pradines \cite{pradines}. More recent contributions have come from C. Debord \cite{debord}, G. Skandalis, and I. Androulidakis \cite{as2}. Notably, a singular foliation, as defined by Androulidakis-Skandalis, becomes a Debord foliation after a Nash blowup \cite{l}.

Various authors have expanded this concept to other settings. For instance, the holonomy groupoid has been constructed for Teichm\"uller and Riemann moduli spaces when viewed as stacks. This is connected with partially foliated structures based on infinite-dimensional Fr\'echet manifolds \cite{meersseman}. Generally, even if one's focus isn't strictly on smooth systems, the holonomy groupoid can offer a valuable geometric interpretation of structures similar to foliation. For instance, within cognitive systems, the geometry of information spaces is smooth only in a generalized (diffeological) sense \cite{gw}. The setting of this work is an ambient category with additional structure and groupoids internal to this category. 
Some notable instances of specific ambient site structures $(\Aa, \Ee, \Tau)$ are as follows:

\begin{enumerate}
\item The case of smooth manifolds $\Aa = \Man$, or compact smooth manifolds with (embedded) corners, $\Aa = \Manec$, with $\Tau$ the open covers and $\Ee$ the (tame) surjective submersions. 

\item Complex manifolds, where the ambient category is $\Aa = \ManC$, $\Tau$ are the open covers, $\Ee$ are surjective holomorphic submersions. 

\item Real analytic manifolds, $\Aa = \ManRA$, with $\Tau$ the open covers, $\Ee$ the surjective real analytic submersions.

\item Algebraic manifolds, $\Aa = \ManAlg$, $\Tau$ the \'etale covers, $\Ee$ the surjective \'etale morphisms. 
\end{enumerate}

Other examples may concern moduli spaces, e.g. atlas constructions for orbifold stratified spaces that are modeled by \'etale Lie groupoids, \cite{chenetal}. The category $\Aa = \ManFr$ of Fr\'echet manifolds fulfills the assumptions \ref{item:site-structure} through \ref{item:principal-bundle} that are stated below, cf. \cite{mz}[Section 9], and the embedding into the category of diffeological spaces is fully faithful, cf. \cite{losik, wockel}. Let us recall the discussion of a singular foliation from \cite{l}: A subsheaf $\F$ of the sheaf of vector fields $\X$ that is closed with regard to Lie bracket and locally finitely generated as an $\O$-module for the relevant sheaf of functions $\O$. After a single Nash blowup, this results in a Debord foliation: the data $(\widetilde{M}, \pi)$ with $\pi \colon \widetilde{M} \to M$ is onto and proper; $\pi_{M_{\mathrm{reg}}, \F} \colon \pi^{-1}(M_{\mathrm{reg}, \F}) \to M_{\mathrm{reg}, \F}$ is one-to-one; the pullback $\presuper{\pi}{\F}$ exists and $\presuper{\pi}{\F}_{|\pi^{-1}(M_{\mathrm{reg}, \F})} \cong \F_{M_{\mathrm{reg}, \F}}$. 

These examples call for a more integrated approach to the holonomy groupoid and its related geometry.
To this end, we introduce the category of virtual manifolds, where an object of this category is a sheaf that reconstructs an atlas which may desingularize a foliation in a particular way; the global object of the sheaf is a groupoid internal to the site structure. These groupoids are not universal integrating groupoids, but may become so, e.g. the $s$-connected component of such a groupoid results in a universal integrating groupoid for the case of a foliation that is defined by an almost injective Lie algebroid over the ambient category of smooth manifolds. In this sense, the atlas described by a given sheaf or virtual manifold prescribes also the particular singularities that are encoded by the original foliation we started from. Another motivation for this study was the idea that it is useful to localize the resulting bicategory of sheaves, which synthesizes the structure of a general atlas, at the regular parts, $M_{\mathrm{reg}}$.

\subsection{The ambient site structure}

\label{subsection:axioms}

The axioms of the ambient category concern a structure encoded by the $3$-tuple $(\Aa, \Tau, \Ee)$, where $\Aa$ is the ambient category, $\Tau$ is a Grothendieck site and $\Ee$ is a suitable collection of admissible epimorphisms. We make the following assumptions:

\begin{enumerate}
\item The triple $(\Aa, \Tau, \Ee)$ forms a site-structure \ref{Def:admepis}. \label{item:site-structure}
\item The site $\Tau$ is subcanonical \ref{Def:subcanonical} and its covers are local \ref{Def:local}. \label{item:subcanonical-local}
\item Any $\Ee$-sheaf over a \v{C}ech groupoid of an $\Ee$-cover, together with some bundle projection, is part of a principal bundle. In addition, this property is $\Tau$-local. \label{item:principal-bundle}
\item There is a fully faithful embedding functor $\Phi$ of $\Aa$ into the category of diffeological spaces, $\Diff$, with the compatibility properties listed below. \label{item:embedding-functor}
\end{enumerate}

The first three of these conditions suffice to define groupoids internal to $\Aa$ in the style of Meyer-Zhu \cite{mz}, as well as study their generalized tensor products. The fourth condition is completely specified in the following subsection below; it is used to construct the generalized sheaves studied in this work and to show that they reconstruct internal groupoids as desired.

\subsection{Embedding into the category of diffeological spaces}

The ambient site structure $(\Aa, \Tau, \Ee)$ is assumed to embed into the category of diffeological spaces $\Diff$. The latter category is also endowed with a suitable Grothendieck topology $\TauDopen$ which is comprised of covers that are $D$-open sets. There is also a class of admissible epimorphisms in $\Diff$, the local subductions, which are denoted by $\Eeloc$.The second set of assumptions concerns the existence and properties of a fully faithful functor
\[ 
\Phi \colon \Aa \to \Diff.
\]
This embedding functor is assumed to be continuous, by which is meant that it maps coverings to coverings, preserves the pullbacks of coverings and furthermore, $\Phi$ is assumed to preserve finite products (the ones that exist in $\Aa$). 
There is a canonical functor $D \colon \Diff \to \Top$ which equips a diffeological space with the $D$-topology; the largest topology such that all plots are continuous. We point out that $D$ has not as many desirable properties. For example, while it preserves colimits, it fails to preserve products in general and it is not full. The usual solution for the lack of product-preservation is to co-restrict $D$ to $\Delta$-generated spaces (e.g. \cite{csw}), i.e. we consider the functor:
\[
D^{\Delta} \colon \Diff \to \Top^{\Delta}. 
\]

Altogether, we assume our ambient category to fit into the functorial extension
\[
\xymatrix{
\Aa \ar[r]^{\Phi} & \Diff \ar[r]^{D^{\Delta}} & \Top^{\Delta}.
}
\]

The site structures on $\Aa$ and $\Diff$ combined with their compatibility allow us to talk about and study sheaves on these categories. Given the assumptions, the category of sheaves on $\Aa$ compares well with the category of sheaves on $\Diff$. In the case of the category of finite dimensional smooth manifolds, $\Aa = \Man$, the pullback functor furnishes an equivalence of these categories by the Grothendieck-Verdier theorem, cf. \cite{sw}. The same cannot be said for $\Diff$ in comparison with $\Top^{\Delta}$. 

\section{$\Tau$-local internal Groupoids and Spanoids}

Let $(\Aa, \Tau)$ denote the ambient site and $\Ee = \Ee(\Tau)$ the maximal collection of $\Tau$-universal epimorphisms that are stable with respect to pullback, cf. \cite{roberts}. 

\begin{Def}
A \emph{site} $\Tau$ on an ambient category $\Aa$ is a collection of families $\{(U_i \to X)_{i \in I}\}_{X \in \Aa_0}$ of arrows in $\Aa_1$ called \emph{covers} between objects of $\Aa_0$ called \emph{opens}. A site verifies the following conditions: 
\begin{enumerate}
\item We have that $\{U' \iso U\} \in \Tau$, i.e. all \emph{isomorphisms} are contained in $\Tau$.

\item Given $\{U_i \to U\}_{i \in I}$ collection of covers in $\Tau$, then for each $V \to U \in \Aa_1$ we have that $\{U_i \ast_U V \to V\}_{i \in I}$ is in $\Tau$, i.e. $\Tau$ is closed with regard to \emph{base change}. 

\item Given $\{U_i \to U\}_{i \in I}$ in $\Tau$ and $\{U_{ij} \to U_i\}_{j \in J}$ in $\Tau$ for each $i \in I$, we have that the composite $\{U_{ij} \to U\}_{(i,j) \in I \times J}$ is also in $\Tau$, i.e. $\Tau$ is closed with regard to \emph{intersections}.
\end{enumerate}

If each cover consists of a single map, the site is called \emph{singleton site}. 

\label{Def:site}
\end{Def}

\begin{Def}
A site $(\Aa, \Tau)$ fulfilling the conditions of Proposition \ref{Prop:subcanonical} is called \emph{subcanonical}. 
\label{Def:subcanonical}
\end{Def}


\begin{Prop}[\cite{mz}, Lem. 2.2]
Let $(\Aa, \Tau)$ be a site. The following conditions are equivalent

\begin{enumerate}
\item Each cover $f \colon U \to X$ in $\Tau$ is a coequalizer of some parallel arrows $g_1, g_2 \colon Z \rightrightarrows U$. 

\item Each cover $f \colon U\to X$ in $\Tau$ is the coequalizer of $\pr_1, \pr_2 \colon U \ast_f U \rightrightarrows U$. 

\item For each $W \in \Aa_0$ and $f \colon U \to X$ in $\Tau$, we have a bijection
\begin{align*}
\Mor_{\Aa}(X, W) &\iso \{h \in \Mor_{\Aa}(U, W) : h \circ \pr_1 = h \circ \pr_2 \ \text{in} \ \Mor_{\Aa}(U \ast_f U, W)\}, \\ 
& g \mapsto g \circ f. 
\end{align*}

\item All representable functors $\Mor_{\Aa}(-, W)$ on $\Aa$ are sheaves. 
\end{enumerate}

\label{Prop:subcanonical}
\end{Prop}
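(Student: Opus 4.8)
The plan is to prove the chain $(1)\Leftrightarrow(2)\Leftrightarrow(3)\Leftrightarrow(4)$, where $(2)\Leftrightarrow(3)$ and $(3)\Leftrightarrow(4)$ are essentially unwindings of definitions and the only step carrying real content is $(1)\Rightarrow(2)$. Throughout I would treat a cover as a single arrow $f\colon U\to X$, which is legitimate by the remark following Definition \ref{Def:site} that a family $(U_i\to X)_{i\in I}$ may be replaced by $\bigsqcup_{i\in I} U_i\to X$; this also guarantees that the fiber products $U\ast_f U$ appearing in (2)--(4) exist, being pullbacks of a cover. For $(2)\Leftrightarrow(3)$, note that to say $f$ is the coequalizer of $\pr_1,\pr_2\colon U\ast_f U\rightrightarrows U$ is to say that for every $W$ and every $h\colon U\to W$ with $h\circ\pr_1=h\circ\pr_2$ there is a unique $\bar h\colon X\to W$ with $\bar h\circ f=h$. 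Since $f\circ\pr_1=f\circ\pr_2$ holds by the definition of the pullback, precomposition with $f$ already lands in the indicated subset of $\Mor_{\Aa}(U,W)$, and the universal property says precisely that $g\mapsto g\circ f$ is the bijection of (3). Thus (2) and (3) are the same statement, phrased once diagrammatically and once via hom-sets.

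Next I would observe that $(3)\Leftrightarrow(4)$ is the translation of the sheaf condition into an equalizer. A representable presheaf $\Mor_{\Aa}(-,W)$ is a sheaf exactly when, for every cover $f\colon U\to X$, the diagram
\begin{equation*}
\Mor_{\Aa}(X,W)\longrightarrow \Mor_{\Aa}(U,W)\rightrightarrows \Mor_{\Aa}(U\ast_f U,W)
\end{equation*}
is an equalizer, the two parallel arrows being induced by $\pr_1$ and $\pr_2$. An element of this equalizer is exactly an $h$ with $h\circ\pr_1=h\circ\pr_2$ equalised onto a unique preimage in $\Mor_{\Aa}(X,W)$, so the equalizer property is the bijection of (3). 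Quantifying over all $W$ yields (4), and conversely (4) specializes to (3).

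It then remains to prove $(1)\Rightarrow(2)$, the converse $(2)\Rightarrow(1)$ being trivial (take $Z=U\ast_f U$). Suppose $f$ is the coequalizer of some pair $g_1,g_2\colon Z\rightrightarrows U$. Because $f\circ g_1=f\circ g_2$, the universal property of the pullback furnishes a unique arrow $k\colon Z\to U\ast_f U$ with $\pr_i\circ k=g_i$. Now fix $h\colon U\to W$ with $h\circ\pr_1=h\circ\pr_2$; composing with $k$ gives $h\circ g_1=h\circ\pr_1\circ k=h\circ\pr_2\circ k=h\circ g_2$, so $h$ coequalizes $g_1,g_2$ and hence factors uniquely through the coequalizer $f$. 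Since $f$ itself satisfies $f\circ\pr_1=f\circ\pr_2$, it both coequalizes $\pr_1,\pr_2$ and admits the requisite factorizations, with uniqueness coming from $f$ being epi (coequalizers are epimorphisms). Therefore $f$ is the coequalizer of $\pr_1,\pr_2$, which is (2).

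The main obstacle is precisely the step $(1)\Rightarrow(2)$: one must check that the comparison arrow $k\colon Z\to U\ast_f U$ transports the coequalizer property from the ad hoc pair $(g_1,g_2)$ to the canonical kernel pair, and in particular that this requires no hypothesis on $k$ (it need not be epi or split) beyond the fact that every cocone out of the kernel pair restricts along $k$ to a cocone out of $Z$. A secondary point requiring care is the existence of the fiber products in (2)--(4); this is where the reduction to the single arrow $\bigsqcup_{i} U_i\to X$ together with the base-change axiom of Definition \ref{Def:site} is invoked.
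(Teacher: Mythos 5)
Your proof is correct, and in fact the paper gives no proof of Proposition \ref{Prop:subcanonical} at all, deferring entirely to the citation (\cite{mz}, Lem.~2.2); your argument coincides with the standard one proved there: reduce to singleton covers, identify (2) with (3) via the universal property of the coequalizer and (3) with (4) via the equalizer form of the sheaf condition, and establish the only substantive implication (1)$\Rightarrow$(2) by the comparison arrow $k \colon Z \to U \ast_f U$ into the kernel pair, whose existence you rightly ground in the base-change axiom of Definition \ref{Def:site}. Nothing further is needed.
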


\begin{Def}
Let $(\Aa, \Tau)$ be a site. We say that $f \colon Y \to X \in \Aa_1$ has property $(P)$ \emph{$\Tau$-locally} if there is a cover $g \colon U \to X$ such that $\pr_2 \colon Y \ast_{f}^{g} U \to U$ has property $(P)$. 
\label{Def:local}
\end{Def}


The condition (3) in particular implies that $\Tau$ is a sieve-structure, reflecting the alternative formulation of the three axioms to be found in the literature, in terms of the notion of a covering sieve. We can always consider a single map $(f_i)_{i \in I} \colon \bigsqcup_{i \in I} U_i \to X$ in lieu of the collection of covers $\{U_i \to U\}_{i \in I}$. 

\begin{Def}
The site $\Ee = \Ee(\Tau)$ is the singleton pre-topological site of $\Tau$-universal epimorphisms, i.e. arrows that are locally sectionable in the sense that $f \in \Ee$ is such that there is a covering family $\{c_i \colon U_i \to X\}$ and lifts $p_i \colon U_i \to S$ such that $f \circ p_i = c_i$. In addition, $f$ is assumed universal with this property. We call the triple $(\Aa, \Tau, \Ee)$ a \emph{site structure}.
\label{Def:admepis}
\end{Def}

\begin{Rem}
One can check that the subcanonicity assumption on $\Tau$ implies that $\Ee = \Ee(\Tau)$ is subcanonical. 
\label{Rem:admepis}
\end{Rem}

In what follows we make use of elementwise notation for groupoids and actions, which can be justified by the algorithm described in \cite[Section 3]{mz}. Denote by $\AS(\Ee)$ the category of $\Aa$-internal spans over $\Ee$ with internal span arrows as the arrows of the category. We introduce next $\Tau$-local internal groupoids, by which is meant a span that has a multiplication which furnishes a groupoid in a $\Tau$-local sense. 

\begin{Def}
A $\Tau$-local internal groupoid in $(\Aa, \Tau, \Ee)$ consists of the data $(\L, s, r) \in \AS(\Ee)_0$, $u \in \Mor_{\Aa}(\L_0, \L_1)$ and $\Aa$-isomorphism $i \colon \L_1 \iso \L_0$ so that $u$ and $i$ are compatible with the span-structure:
\begin{enumerate}
  \item 
  \[
    s \circ i = r, \quad r \circ i = s.
  \]

  \item 
  \[
    u^{\ast} s_{|\L_0} = u^{\ast} r_{|\L_0} = \id_{\L_0}.
  \]
  
  Multiplication is an arrow 
  \[
    p \colon \L_2 = \L_{1} \, {_{s}\ast_{r}} \, \L_1 \to \L_1
  \]
  such that there is a cover 
  \[
    c \colon D^2\L \to \L_2
  \]
  in $\Tau$ with 
  \[
    \pr_2 \colon \L_2 \ast_{c} D^2 \L \to D^2 \L
  \]
  being compatible with the remaining groupoid structure $(\L_0, \L_1, r, s, u, i)$: 
  
  \item 
  \[
    s \big( (p \circ \pr_2)(\gamma_1, \gamma_2) \big) = s(\gamma_2),
  \]
  
  \[
  	r \big((p \circ \pr_2)(\gamma_1, \gamma_2)\big) = r(\gamma_1), \quad (\gamma_1, \gamma_2) \in \L_2 \ast_c D^2 \L.
  \]
  
  \item 
  \[
  	(i \circ (p \circ \pr_2))(\gamma_1, \gamma_2) = (p \circ \pr_2)(i(\gamma_1), i(\gamma_2)), \quad 		(\gamma_1, \gamma_2) \in \L_2 \ast_c D^2 \L.
  \]
  
  \item 
  \[
  	(p \circ \pr_2)((u \circ r)(\gamma), \gamma) = \gamma = (p \circ \pr_2)(\gamma, (u \circ s)				(\gamma)).
  \]
  
  \item 
	\begin{align*}
    	&(p \circ \pr_2)\big( (p \circ \pr_2)(\gamma_1, \gamma_2), \gamma_3 \big) = (p \circ \pr_2)\big( 		\gamma_1, (p \circ \pr_2)(\gamma_2, \gamma_3) \big), \\
    	&\quad (\gamma_1, \gamma_2) \in \L_2 \ast_c D^2 \L, \\
    	&\quad (\gamma_2, \gamma_3) \in \L_2 \ast_c D^2 \L.
  	\end{align*}
\end{enumerate}

\label{Def:locgrpd}
\end{Def}

\begin{Def}
An internal groupoid $\G$ is called a \emph{spanoid} if for any span $S$ over $\Ee$ there is at most one span arrow $S \to \G$. Analogously, a $\Tau$-local spanoid is a $\Tau$-local groupoid with this property.
\label{Def:spanoid}
\end{Def}

We recall some of the constructions from \cite{debord} and how they carry over to our setting; see also \cite{bigonnet}.

\begin{Rem}
\emph{1)} Let $\L \rightrightarrows \L_0$ be a $\Tau$-local spanoid and $D \to \L_2$ a cover, then there is at most one arrow $p \colon D \to \L$ such that $s \circ p = s \circ \pr_2, \ r \circ p = r \circ \pr_1$. Therefore, a maximal cover $D_{\mathrm{max}}^2 \L \to L_2$ exists. It is understood from now on that we always fix the maximal cover for any given $\Tau$-local spanoid.

\emph{2)} Let $\varphi \colon \L \to \widetilde{\L}$ be a span-arrow for a $\Tau$-local groupoid $\L$ and a $\Tau$-local spanoid $\widetilde{\L}$. Then $\varphi$ is promoted to a strict groupoid morphism. The reasoning is that the spanoid property fixes the inverse and multiplication as uniquely defined internal arrows.
\label{Rem:locgrpd}
\end{Rem}


\begin{Prop}
A ($\Tau$-local) groupoid is a ($\Tau$-local) spanoid if and only the only local section of both the source and the range arrow is the unit arrow.
\label{Prop:spanoid}
\end{Prop}

\begin{proof}
Let $\L$ be a ($\Tau$-local) spanoid and let $\sigma$ be a local section of $r$ and $s$ for a cover $c_U \colon U \to \L_0$, i.e. $r \circ \sigma = c_U, \ s \circ \sigma = c_U$. Then $c_U^{\ast} u$ is a span arrow between the spans $\L_0 \xleftarrow{c_U} U \xrightarrow{c_U} \L_0$ and $\L_0 \xleftarrow{r} \L \xrightarrow{s} \L_0$. By the $\Tau$-universality of the epimorphisms $\Ee$ in $\Tau$, we can take $\bigsqcup_i U_i$ for a covering family $\{U_i \to \L_0\}_{i \in I}$ sufficient to cover $\L_0$, so that $c_U$ is in $\Ee$. Since $\L$ is a spanoid by assumption, we have that there is at most one span-arrow $U \to \L$ and hence $\sigma = c_U^{\ast} u$. For the other direction, let $\L$ have the property that the only $\Tau$-local section of $r, s$ is the unit arrow. Let $S$ be a span with arrows $a, b$ to $\L_0$ such that $f_1, f_2 \colon S \rightrightarrows S$ are two span arrows with $r \circ f_i = a, \ s \circ f_i = b$. 
Choose a $\Tau$-local section $\sigma \colon U \to S$ of $a$ and define $\nu \colon U \to \L$ via $x \mapsto (f_1(\sigma(x))^{-1} \cdot f_2(\sigma(x))$. 
\[
\begin{tikzcd}[column sep=3cm, row sep=2cm]
& \L \ar[d, shift left, "r"] \ar[d, swap, "s"] & \ar{l}{c_U^{\ast} u} U \ar[d, shift left, "c_U"] \ar[d, swap, "c_U"] \\
U \ar{ur}{\sigma} \ar{r}{c_U} & \L_0 \ar[u, bend left, "u"] \ar{r} & \L_0 
\end{tikzcd}
\]

Apply this process to a singleton cover $U = \bigsqcup_{i} U_i \to S$ in $\Ee$, so that 
\[
r \circ \nu = (a \circ \sigma)(x) = c_U(x), \ x \in U. 
\]

Hence, $\nu$ is an $\Ee$-section of both $r, s$. Therefore $f_1 \circ \sigma = f_2 \circ \sigma$. Since this works for any singletonized $\Tau$-local sections of $a$, it follows that $f_1 = f_2$. 
\end{proof}

We record the following strenghening which is relevant for an alternate approach to atlas constructions via sheaves of germs, as will be discussed below.

\begin{Prop}
Let $\L \rightrightarrows \L_0$ be a $\Tau$-local groupoid. The following assertions are equivalent:

\emph{1)} $\L$ is isomorphic as a span to a $\Tau$-local groupoid $\widetilde{\L} \rightrightarrows \L_0$ such that the only $\Tau$-local section of $\widetilde{r}, \widetilde{s}$ is the inclusion of the unit arrow. 

\emph{2)} $\L$ is isomorphic as a span to a $\Tau$-local spanoid. 
\label{Prop:spanoid2}
\end{Prop}

In order to prove this statement, we introduce notation that deals with restrictions of $\Tau$-local groupoids. To this end, let $c_V \colon V \to \L$ be a cover and $V^{+}$ all arrows $\gamma \in V$ such that $\gamma^{-1}$ is in $V$. Set $\L(V) := \L {_{s} \ast_{c}} V {_{c} \ast_{r}} \L \rightrightarrows V$, where we denote the range and source arrows by $\widetilde{r}$ and $\widetilde{s}$ respectively. Assuming $V {_{\id} \ast_u} \L_0$ exists, then with $\L(V \ast \L_0) = \L {_{s} \ast_u} (V \ast \L_0) {_{u} \ast_r} \L$ set $\L^{+}(V) := V^{+} {_{s} \ast_{\widetilde{r}}} \L(V \ast \L_0) {_{\widetilde{s}} \ast_r} V^{+} \rightrightarrows V \ast \L_0 =: \L^{+}(V)_0$. This furnishes a $\Tau$-local groupoid with domain:
\[
D^2 \L^{+}(V) := D^2 \L \ {_{p} \ast} \ (\L^{+}(V) {_{s} \ast_s} \L^{+}(V)). 
\]

Let two $\Tau$-local groupoids $\L \rightrightarrows \L_0, \ \widetilde{\L} \rightrightarrows \widetilde{\L}_0$ and let $\L_0 \ast \widetilde{L}_0 \in \Aa_0$ be given. Then $\L$ is isomorphic to $\widetilde{\L}$ as spans if there are covers $c \colon V \to \L_0 \ast \widetilde{\L}_0$ and $\widetilde{c} \colon \widetilde{V} \to \L_0 \ast \widetilde{\L}_0$ as well as an internal isomorphism $\varphi \colon V \to \widetilde{V}$ such that $\widetilde{s} \circ \varphi = s, \ \widetilde{r} \circ \varphi = r$. 

\begin{proof}
By virtue of Proposition \ref{Prop:spanoid} we obtain the direction \emph{2)} $\Rightarrow$ \emph{1)}. Let $\varphi \colon \widetilde{V} \to V$ be an isomorphism between two spans, where $\widetilde{V} \to \L_0 \ast \widetilde{\L}_0, \ V \to \L_0 \ast \widetilde{\L}_0$ such that $\widetilde{s} \circ \varphi = s, \ \widetilde{r} \circ \varphi = r$. We note that the domain $D^2 \L$ covers $\{(\gamma, \gamma^{-1}) : \gamma \in \L\}$ in the sense that there is a cover $c_W \colon W \to \L$ with the following properties: 
\begin{itemize}
\item $W = W^{-1}$, 

\item $c \colon (W \times W) \ast \L_2 \to D^2 \L$ is a cover,

\item $c^{\ast} p_{\L} \colon (W \times W) \ast \L_2 \to V$ is a cover.
\end{itemize}

Also, pulling back $r, s$ via $c_W$ induces a $\Tau$-local groupoid structure on $W$:
\[
\begin{tikzcd}[column sep=3cm, row sep=2cm]
W \ar[d, shift left, "c_W^{\ast} r"] \ar[d, swap, "c_W^{\ast} s"] \ar{r}{c_W} & \L_1 \ar[d, shift left, "r"] \ar[d, swap, "s"] \\
\L_0 \ar[equal]{r} & \L_0 
\end{tikzcd}
\]

By construction, $W$ and $\L$ are $\Tau$-locally isomorphic as spans. Let $\L_0 \xleftarrow{a} S \xrightarrow{b} \L_0$ be a span with two span arrows $f_1, f_2 \colon S \to W$. Given a singleton cover $c := \bigsqcup_i c_i \colon U := \bigsqcup_i U_i \to \L_0$ and local sections $\sigma := \bigsqcup_i \sigma_i \colon U \to S$, $c \in \Ee$, define 
\[
\nu \colon U \to W, \ x \mapsto f_1(\sigma(x))^{-1} f_2(x).
\] 
Then $\nu$ is an $\Ee$-local section for $r$ and $s$, thereby it is the unit arrow and $f_1 \circ \sigma = f_2 \circ \sigma$. Repeating this line of argument for any such constructed $\Ee$-local section of $a$, we obtain that $f_1 = f_2$. 
\end{proof}

\begin{Def}
Let $\L \rightrightarrows \L_0$ denote a $\Tau$-local groupoid with domain $D^2 \L$. A \emph{right $\Tau$-local action} $Z \ \rotatebox[origin=c]{90}{$\circlearrowright$}\ \L$ is implemented by $\alpha$ where $c_{\alpha} \colon D_{\alpha} \to Z {_{q} \ast_r} \L$ is a cover, and its \emph{anchor} is denoted by $q \colon Z \to \L_0$, such that the following conditions hold:
\begin{enumerate}
  \item 
  \[
    (q \circ \alpha)(z, \gamma) = s(\gamma), \ (z, \gamma) \in D_{\alpha}.
  \]

  \item 
  Given that $(z, \gamma_1) \in D_{\alpha}, \ (\gamma_1, \gamma_2) \in D^2\L$ and if one of $\alpha(\alpha(z, \gamma_1), \gamma_2)$ or $\alpha(z, m(\gamma_1, \gamma_2))$ is defined, then so is the other and they are equal.

  \item   
  $(z, \id_{q(z)})$ is contained in $D_{\alpha}$ and $\alpha(z, \id_{q(z)}) = z$, for all $z \in Z$. 
\end{enumerate}

The action is called an \emph{$\Ee$-sheaf} if $q \in \Ee$. The action is abbreviated as $\alpha(z, \gamma) = z \cdot \gamma$. Denote by $\Upsilon$ the arrow $(\pr_1, \alpha) \colon D_{\alpha} \to Z {_{q} \ast_q} Z$. The action is called \emph{principal} if there is a covering family $\{c_i \colon V_i \to Z\}_{i \in I}$ so that $\widetilde{c_i} \colon V_i \ {_{q} \ast_r} \ \L \to D_{\alpha}$ is a cover, induced by $c_i$, with the property that the arrow 
\[
\widetilde{c_i}^{\ast} \Upsilon \colon D_{\alpha} \ {_{\Upsilon} \ast_{\widetilde{c_i}}} \ (V_i \ {_{q} \ast_r} \ V_i) \iso V_i \ {_{q} \ast_q} \ V_i
\]

furnishes an isomorphism for each $i \in I$. 

\label{locaction}
\end{Def}

The $\Ee$-sheaves and principal actions are also being studied in \cite{mz, wockel} in order to construct the bicategory of internal groupoids, $\AGbi$. The aim here is to extend these constructions in order to compose $\Tau$-local bibundle correspondences. Let us fix some useful notation.

\begin{Not} 
We call $\Sh(\Ee)_{\L}$ the category of $\Ee$-sheaves, which consists of right actions by $\L$ where the anchor of the action is contained in $\Ee$ and the arrows are given by the obvious equivariant maps. Dually, $_{\L} \Sh(\Ee)$ denotes the corresponding category of left actions that are sheaves. By $\Aa_{\L}$ we denote the category of right actions and dually, by $_{\L} \Aa$ the category of left actions. Given an admissible epimorphism $p \colon X \to Z \in \Ee$, denote by $\Cech(p)$ the so-called \v{C}ech groupoid, given by the data 
\[
\Cech(p)_0 = X, \ \Cech(p)_1 = X {_{p} \ast_p} X, \ \check{r}(x_1, x_2) = x_1, \ \check{s}(x_1, x_2) = x_2 \ \text{for} \ p(x_1) = p(x_2); 
\]
the inverse $\check{\iota}(x_1, x_2) = (x_2, x_1)$ and multiplication $(x_1, x_2) \cdot (x_2, x_3) = (x_1, x_3)$ for $p(x_1) = p(x_2) = p(x_3)$. 
\label{Not:Esheaves}
\end{Not}

\begin{Rem}
We recall the definition of orbit spaces \cite[Lem. 5.3]{mz}. Let $Z \ \rotatebox[origin=c]{90}{$\circlearrowright$}\ \G$ be a given right action of a ($\Tau$-local) internal groupoid on $Z$, then the orbit-space is the coequalizer:
\[
\begin{tikzcd}
Z {_q} \ast_r \mathcal{G} \ar[r, shift left, "\pr_1"] \ar[r, swap, shift right, "\alpha"] & Z \ar{r}{\pi_Z} & Z / \G.
\end{tikzcd}
\]

If the action is a principal $\G$ bundle, then $q$ is equivalent to $\pi_Z$ and $\pi_Z \in \Ee$.
\label{Rem:orbitspace}
\end{Rem}

\begin{Def}
\emph{a)} A \emph{$\Tau$-local bibundle equivalence or Morita equivalence} between $\Tau$-local groupoids $\L(1)$ and $\L(0)$ is given by the following data and relations:
\[
\begin{tikzcd}[every label/.append style={swap}]
\L(1) \ar[d, shift left] \ar[d] \ar[symbol=\circlearrowleft]{r} & \ar{dl}{p_f} Z_f \ar{dr}{q_f} & \ar[symbol=\circlearrowright]{l} \L(0) \ar[d, shift left] \ar[d] \\
\O_1 & & \O_0
\end{tikzcd}
\]

\emph{Notation:} $f \colon \Bun{\L(1)}{Z_f}{\L(0)}$.

\begin{enumerate}
	\item The actions are commuting $\Tau$-principal actions.
	
	\item Denoting by $\{(V_i \to Z_f\}_{i \in I}$ the defining covering family, $q_f$ induces an isomorphism $V_i / \L(0) \iso q_f(V_i)$ and $p_f$ induces an isomorphism $V_i / \L(1) \iso p_f(V_i)$ for each $i \in I$. 
	
	\item $(p_f, q_f)$ furnish an arrow $Z_f \to \O_1 \times \O_0$ contained in $\Ee$.
\end{enumerate}

\emph{b)} We denote the equivalence class of $\Tau$-local Morita equivalences by $[\Bun{\L(1)}{Z_f}{\L(0)}] =: f \colon \L(1) \dashedrightarrow \L(0)$ and we call this a \emph{Morita isomorphism}, where the equivalence relation is defined as the span-isomorphisms that intertwine the actions. A representative of the class is called a \emph{span for $f$}. 

\emph{c)} A \emph{bi-bundle correspondence} between $\L(1)$ and $\L(0)$ is given by the same data, but with the variation on the relations that the right action is principal, $q_f \in \Ee$ and $p_f$ induces the isomorphisms $V_i / \L(1) \iso p_f(V_i), \ i \in I$. 
\label{Def:locbibundle}
\end{Def}

\begin{Lem}
A $\Tau$-local Morita isomorphism $f$ between $\Tau$-local spanoids is entirely determined by a span for $f$, up to span isomorphism.
\label{Lem:locbibundle}
\end{Lem}

\begin{proof}
Let $V_i \to Z_f$ be a cover that is part of a covering family with the property that $V_i  / \L(0) \iso p_f(V_i)$ is an isomorphism induced by $p_f$. Given a span $\O_1 \xleftarrow{b} S \xrightarrow{a} \O_0$ and arrows $g_0, g_1 \colon S \to V_i$ such that $q_f \circ g_0 = a = q_f \circ g_1$ and $p_f \circ g_0 = b = q_f \circ g_1$. Define $h \colon S \to \L(0), \ x \mapsto \gamma$ via $g_0(x) = g_1(x) \cdot \gamma$ and $\widetilde{h} \colon S \to \L(0)$ via $x \mapsto a(x)$. Then $s_0 \circ h = s_0 \circ \widetilde{h} = a$ and $r_0 \circ h = r_0 \circ \widetilde{h} = a$. If $\L(0)$ is a $\Tau$-local spanoid, there is at most one span arrow between $\O_0 \xleftarrow{a} S \xrightarrow{b} \O_0$ and $\O_0 \xleftarrow{r_0} \L(0) \xrightarrow{r_0} \O_0$, hence $h = \widetilde{h}$ and $g_0 = g_1$. The assertion follows.
\end{proof}

The main result of this section is the composition theorem for $\Tau$-local bibundle correspondences. We make use of the assumptions (1) through (3) as stated in Section \ref{subsection:axioms}. 

\begin{Thm}
Let $g \colon \Bun{\L(2)}{Z_g}{\L(1)}$ and $f \colon \Bun{\L(1)}{Z_f}{\L(0)}$ be $\Tau$-local bibundle correspondences. There is a $\Tau$-local subgroupoid cover $\H_1 \to \L(1)$, as well as covers $c_g \colon V_1 \to Z_g, \ c_f \colon V_2 \to Z_f$ so that 
\[
g_{|\L(2), V_1, \H_1} \colon \Bun{\L(2)}{V_1}{\H_1}, \ f_{|\H_1, V_1, \L(0)} \colon \Bun{\H_1}{V_2}{\L(0)} 
\]

become composable in the sense that the product
\[
V_1 \circledast_{\H_1} V_2 := V_1 {_{c_g^{\ast} q_g} \ast_{c_f^{\ast} p_f}} V_2 / \H_1
\]

exists as an object in $\Aa_0$ and carries the canonical actions $\Bun{\L(2)}{V_1 \circledast_{\H_1} V_2}{\L(0)}$ that furnish a $\Tau$-local bibundle correspondence. 

\emph{Notation:} We denote the product by $Z_f \circledast_{\mathrm{loc}} Z_g$, with underlying span $Z_g \ast_{\mathrm{loc}} Z_f$. 
\label{Thm:locbibundle}
\end{Thm}

\begin{proof}
For a given admissible epimorphism $p \colon X \to Z \in \Ee$, consider $Z$ as a $0$-groupoid, $X$ an equivalence, then we have the induced functor
\[
\Sh(\Ee)_Z \to \Sh(\Ee)_{\Cech(p)}.
\]

The application of this functor, combined with a straightforward adapation of the argument of \cite[proof of Prop. 7.6]{mz}, making appropriate use of the slightly stronger assumption \ref{item:principal-bundle}, furnishes the functor
\[
\Sh(\Ee)_{\L(1)} \to _{\L(0)}\Aa, \ W \mapsto Z \circledast_{\mathrm{loc}} W.
\]

Here $\Bun{\L(0)}{Z_g}{\L(1)}$ denotes the given $\Tau$-local bibundle correspondence.
\[
\begin{tikzcd}[every label/.append style={swap}]
& V_1 \ar{d}{c_g} & \ar[symbol=\circlearrowright]{l} \H_1 \ar{d} \ar[symbol=\circlearrowleft]{r} & V_2 \ar{d}{c_f} & \\
\L(2) \ar[d, shift left] \ar[d] \ar[symbol=\circlearrowleft]{r} & \ar{dl}{p_g} Z_g \ar{dr}{q_g} & \ar[symbol=\circlearrowright]{l} \L(1) \ar[d, shift left] \ar[d] \ar[symbol=\circlearrowleft]{r} & \ar{dl}{p_f} Z_f \ar{dr}{q_f} & \ar[symbol=\circlearrowright]{l} \L(0) \ar[d, shift left] \ar[d] \\
\O_2 & & \O_1 && \O_0
\end{tikzcd}
\]

In other words, there exist subgroupoid covers $V_1, V_2$, that fit into the above diagram, in such a way that the local composition exists when constructed out of the induced coequalizer diagrams.
\end{proof}

\section{Virtual Manifolds and Groupoid Reconstruction}





The sheaves are constructed using a pseudo-group generated by $\Tau$-local Morita isomorphisms, see also \cite{debord}. We make use of the embedding functor into the category of diffeological spaces - using assumption \ref{item:embedding-functor} of Section \ref{subsection:axioms} - in order to specify a path holonomy diffeology on the quotient of the pseudo-group using the final diffeology generated by the natural projection. 
The main result is that the sheafification produces a category of $\Ee$-sheaves which globalize to groupoids that are internal to the site structure. Conditions on the path holonomy diffeology correspond to certain properties of the underlying foliation, e.g. in the special case of the ambient category of smooth manifolds, projectivity and involutivity of a foliation are reflected by properties of the associated path-holonomy diffeology.  

\begin{Def}
A \emph{generalized atlas} $\U_X := \{\L_i \rightrightarrows U_i\}_{i \in I}$ consists of $\Tau$-local groupoids adapted to the covering family $\{\varphi_i \colon U_i \to X\}_{i \in I}$ such that
\begin{enumerate}
\item For each $i \in I$ the $\Tau$-local groupoid $\L_i$ is a $\Tau$-local spanoid.

\item For each $i, j \in I$ there are $\Tau$-local subgroupoids $\H_{i}^{j} \to \L_i, \ \H_{j}^{i} \to \L_j$, where the arrows are covers, and span-isomorphisms $\varphi_{ij} \colon \H_i^j \iso \H_j^i$.
\end{enumerate}

\label{Def:gendatlas}
\end{Def}

\begin{Rem}
The $\Tau$-local spanoid propery yields that $\varphi_{ij}$ are uniquely determined, i.e. there is a maximal cover $\mathcal{D}(\varphi_{ij}) \to \L_i$ such that it factors through $U_i \ast_U U_j \to \mathcal{D}(\varphi_{ij})$; called the \emph{domain} of $\varphi_{ij}$. 
\label{Rem:gendatlas}
\end{Rem}

We describe operations on a $\Tau$-local Morita isomorphism $f \colon \L(1) \dashedrightarrow \L(0)$ with a given span $\mathcal{O}_1 \xleftarrow{p_f} Z_f \xrightarrow{q_j} \mathcal{O}_0$ for $f$. 

\begin{itemize}
\item \emph{Identity:} $\id_{L(0)} \colon \L(0) \dashedrightarrow \L(0), \ Z_{\id_{\L(0)}} = \L(0)$, $p_{\id_{\L(0)}} = s(0), \ q_{|\id_{\L(0)}} = r(0)$, with actions given by right and left multiplication. 

\item \emph{Inversion:}  $Z_{f^{-1}} = Z_f, \ p_{f^{-1}} = q_f, \ q_{f^{-1}} = p_f$. The right $\Tau$-local action $Z_{f^{-1}} \ \rotatebox[origin=c]{90}{$\circlearrowright$}\ \L(1)$ (respectively $\L(0) \ \rotatebox[origin=c]{90}{$\circlearrowleft$}\ Z_{f^{-1}}$) is implemented by $\alpha_1(z, \gamma_1) = \gamma_1^{-1} \cdot z$ (respectively $\alpha_0(\gamma_0, z) = z \cdot \gamma_0^{-1}$). Then $f^{-1} \colon \L(0) \dashedrightarrow \L(1)$ is a $\Tau$-local Morita isomorphism with span $\mathcal{O}_0 \xleftarrow{q_{f^{-1}}} Z_{f^{-1}} \xrightarrow{p_{f^{-1}}} \mathcal{O}_1$ for $f^{-1}$. 

\item \emph{Restriction:} Let $\H_0 \to \L(0), \ \H_1 \to \L(1)$ be $\Tau$-local subgroupoid covers and a cover $V \to Z_f$ such that $p_f(V)$ is the units of $\H_0$ and $q_f(V)$ is the units of $\H_1$. The restriction of $f$, denoted by $f_{|\H_1, V, \H_0} \H_1 \dashedrightarrow \H_0$, admits as span the restriction $q_f(V) \xleftarrow{q_f} V \xrightarrow{p_f} p_f(V)$ with $\Tau$-local action induced by $\L(1)$ and $\L(0)$. 

\item \emph{Composition:} Explained by Theorem \ref{Thm:locbibundle}. 
\end{itemize}

\begin{Def}
Let $\U := \U_X$ be a given generalized atlas and assume that $\U$ is stable with regard to restriction. Then the pseudogroup $\Psi_{\U}$ is defined to consist of local Morita isomorphisms between elements of $\U$ such that the identity is contained in $\Psi_{\U}$ and $\Psi_{\U}$ is stable with regard to inversion, local composition and restriction.
\label{Def:pseudogroup}
\end{Def}

We introduce an equivalence relation on a pseudogroup and pass to the quotient; we do this, after utilization of the embedding $\Phi$, inside the category of diffeological spaces. Let $f \colon \L(1) \dashedrightarrow \L(0)$ and $g \colon \L(1) \dashedrightarrow \L(0)$ be $\Tau$-local Morita isomorphisms. Consider the corresponding spans $\mathcal{O}_1 \xleftarrow{p_f} Z_f \xrightarrow{q_f} \mathcal{O}_0, \ \mathcal{O}_1 \xleftarrow{p_g} Z_g \xrightarrow{q_g} \mathcal{O}_0$. By continuity, the functor $\Phi \colon (\Aa, \Tau, \Ee) \to (\Diff, \TauDopen, \Eeloc)$ maps the given spans to spans over $\Eeloc$ (local subductions). Introduce germs, where $[f]_{z_f} = [g]_{z_g}$ if there are $D$-open neighbhorhoods $V_{z_f}, \ V_{z_g}$ in $Z_f$ and $Z_g$ respectively and $\Eeloc$ span isomorphisms $\varphi \colon V_{z_f} \iso V_{z_g}$ such that $\varphi(z_f) = z_g$. Denote the underlying equivalence relation by $\sim$. Let us fix the notation $\PhiPsi_{\U} := \Phi_{\ast} \Psi_{\U}$ for the pseudo-group formed out of the spans over $\Eeloc$ that are images under the functor $\Phi$ of the corresponding spans over $\Ee$. Note that the span-generators of the pseudogroup $\Psi_{\U}$ form a cocycle. To this end, consider the span isomorphisms $\varphi_{ij} \colon \H_i^j \iso \H_{j}^i$, $\varphi_{jk} \colon \H_j^k \iso \H_k^j$ and $\varphi_{ik} \colon \H_i^k \iso \H_k^i$ between $\Tau$-local subgroupoids with given covers $\H_i^j \to \L(i), \ \H_i^k \to \L(i), \ \H_j^i \to \L(j), \ \H_j^k \to \L(j), \ \H_k^i \to \L(k), \ \H_k^i \to \L(k)$. Then there are span isomorphisms completing the diagram:
\[
\begin{tikzcd}[column sep=3cm, row sep=2cm]
\H_j^i \ar[d, dashed, ""] \ar[r, "\varphi_{ij}", "\simeq"'] & \H_i^j \ar[d, dashed, ""] \ar[r, dashed, ""] & \H_i^k \ar[d, "\varphi_{ik}", "\simeq"'] \\
\H_j^k \ar[r, "\varphi_{jk}", "\simeq"'] & \H_k^i \ar[r, dashed, ""] & \H_k^i
\end{tikzcd}
\]

Since the elements of $\PhiPsi_{\U}$ for a given $\U = \U_X$, are uniquely determined by their representing spans up to span-isomorphism, we can form the quotient $\PhiPsi_{\U} / \sim$ and endow it with the following so-called path holonomy diffeology $\P(\Psi_{\U})$. As preparation, let us recall some facts regarding diffeology, cf \cite{i}. 

\begin{Def}
The set $\P(X)$ consisting of maps $\chi \colon \O_{\chi} \to X$ with $\O_{\chi} \subset \Rr^n$ open for some $n \in \Nn$ is called a \emph{diffeology} on $X$ and its elements are called \emph{plots}, if the following conditions hold:

\begin{enumerate}
\item The constant functions $\Rr^n \to x \subseteq X$ are contained in $\P(X)$ for each $x \in X, \ n \in \Nn$.

\item If $\chi \colon \O_{\chi} \to X$ is such that for each $y \in \O_{\chi}$ there is an open subset $V \subseteq \O_{\chi}$ such that $\chi_{|V} \in \P(X)$, then $\chi \in \P(X)$. 

\item For each smooth $f \colon V \to \O_{\chi}$, $V \subseteq \Rr^n$ open, the composition with a given plot $\chi$, $\chi \circ f$ is contained in $\P(X)$. 

\end{enumerate}

\label{Def:diffeology}
\end{Def}

We endow the quotient $\PhiPsi_{\U} / \sim$ with the quotient diffeology $\P(\PhiPsi)$ which is the final diffeology induced by the projection $q \colon \PhiPsi_{\U} \to \PhiPsi_{\U} / \sim$.  The $D$-topology of this diffeology coincides with the quotient topology, when the pseudo-group is endowed with its canonical $D$-topology, cf. \cite{az, csw, i}. 

\begin{Prop}
Let $\U$ be some generalized atlas over $X \in \Aa_0$. 
\begin{enumerate}
\item The quotient map $q \colon \PhiPsi_{\U} \to \PhiPsi_{\U} / \sim$ is a local subduction.

\item Any two final diffeologies over (maximal) generalized atlases $\U$ and $\U'$ over $X$ have the same path-holonomy diffeology. 
\end{enumerate}

\label{Prop:pathholdiffeo}
\end{Prop}

\begin{proof}
We show \emph{(1)}, the proof of \emph{(2)} goes along similar lines; see also \cite{az} for a similar setting. Consider an element $\L(U) \rightrightarrows U$ in $\U_X$ and let $\H_U \to \L(U)$ denote the defining cocycle subgroupoid, up to a fixed span isomorphism using Lemma \ref{Lem:locbibundle}. Fix an element $w \in \H_U$, $q(w) = z$ and take a plot $\chi \colon \O_{\chi} \to \PhiPsi_{\U} / \sim$ and $x \in \O_{\chi}$ with $\chi(x) = z$. There is a connected open neighborhood $V$ of $x$ and a plot $\chi' \colon V \to \PhiPsi_{\U}$ so that $\chi_{|V} = q \circ \chi'$. Then $w' = \chi'(x)$ is in some $\H_{U'}$ and shrinking $U'$ if necessary, there is a span isomorphism $\varphi \colon \H_U \iso \H_{U'}$ with $q(w) = q(w') = z$ and $\varphi \circ \chi' \colon V \to \H_U$ furnishes a lifting.
\end{proof}


Given $[f]_{z} \in \PhiPsi_{\U} / \sim$ and let $f \colon \L(1) \dashedrightarrow \L(0)$ be a representative with a given span $\O_1 \xleftarrow{p_f} Z_f \xrightarrow{q_f} \O_0$ and $z \in Z_f$. By Lemma \ref{Lem:locbibundle} the map $Z_f \to \PhiPsi_{\U} / \sim, \ w \mapsto [f]_w$ is $D$-locally injective. Also, define the map $X \to \PhiPsi_{\U} / \sim, \ x \mapsto [\id_{\L}]_x$ where $\L$ is some element of $\U$ and $x$ is contained in $\L_0$. 


\begin{Thm}
The groupoid $\G := \PhiPsi_{\U} / \sim$ with units $X$ for a given generalized atlas $\U := \U_X$ with $X \in \Aa$ is a spanoid internal to the site structure $(\Aa, \Tau, \Ee)$. 
\label{Thm:internalization}
\end{Thm}

\begin{proof}
The unit inclusion $u \colon X \to \G$ which maps to the identities of the corresponding maximal spanoids. The source $s \colon \G \to X$ maps $[\Bun{\L(1)}{Z_f}{\L(2)}]_z \mapsto q_f(z)$ and $r \colon \G \to X$ maps $[\Bun{\L(1)}{Z_f}{\L(2)}]_z \mapsto p_f(z)$. The composition is defined by 
\[
[\Bun{\L(2)}{Z_g}{\L(1)}]_t \cdot [\Bun{\L(1)}{Z_f}{\L(0)}]_z = [\Bun{\L(2)}{Z_g \circledast_{\mathrm{loc}} Z_f}{\L(0)}]_{(t,z)}.
\]

The inverse is defined as
\[
i \colon \G \to \G, \ [\Bun{\L(1)}{Z_f}{\L(0)}]^{-1} = [\Bun{\L(0)}{Z_f^{-1}}{\L(1)}]
\]

and this furnishes the groupoid structure on $\G$. Let $U \to X$ be a cover in $\Tau$. Define $\nu \colon U \to \G$ to be a $\Tau$-local section of both $r$ and $s$. Take an element $\L \rightrightarrows U$ with source / range denoted by $\widetilde{s}, \ \widetilde{r}$ of $\U$. Denote by $f \colon \L \dashedrightarrow \L$ a $\Tau$-local Morita isomorphism implemented by the span $U \xleftarrow{p_f} \L \xrightarrow{q_f} U$ such that the image of $\nu$ is a subset of $\{[f]_z : z \in Z_f\}$. Then there is a unique arrow $\widetilde{\nu}$ in $\Aa$ such that $\nu(x) = [f]_{\widetilde{\nu}(x)}, \ x \in U$ and $p_f \circ \widetilde{\nu} = q_f \circ \widetilde{\nu} = \id_U$. Consider the arrow $\gamma \mapsto \gamma \cdot \widetilde{\nu}(\widetilde{s}(\gamma))$ denoted $\varphi \colon \L \to Z_f$. The multiplication sign here refers to the canonical $\Tau$-local action; in particular $\varphi$ induces an isomorphism between spans in $\Aa$. This isomorphism is mapped via $\Phi$ so that there is a $D$-local neighborhood $W$ of $U$ in $\L$ such that
\[
[\id_{\L}]_w = [f]_{\varphi(w)}, \ w \in W. 
\]

If $x \in U$, we get that $[\id_{\L}]_x = [f]_{\varphi(x)} = [f]_{\widetilde{\nu}(x)} = \nu(x)$. If $\G$ is $\Aa$-internal this will then furnish the spanoid property of $\G$ by Proposition \ref{Prop:spanoid}, since we just showed that $\nu$ is the inclusion of units. We are left to check that $\nu$ is an admissible monomorphism, or dually that $\nu^{\mathrm{op}} \colon \G \to X$ is an admissible epimorphism, i.e. a universally locally sectionable arrow. To this end, let us consider the decomposition $\G = \bigsqcup_{x \in X} \G_x$ into the sections and define the canonical action of $\G$ on itself with anchor given by the range $r$, as defined above. Let $(c_i \colon U_i \to X)_{i \in I}$ be the covering family that defines the generalized atlas $\U$. Then there is a lifting $f \colon U_i \to \G$ which is defined by taking a possibly larger $D$-open neighborhood $U$ and mapping it into a $\G_x$ via $\nu$, for a given $x \in U_i \subset U$ so that $r \circ f = c_i$. This property of $r$ is invariant under a change of base by Prop. \ref{Prop:pathholdiffeo} \emph{(2)}. Hence $r$ is an admissible epimorphism. 
\end{proof}


\begin{thebibliography}{99}
\bibitem{as2} I. Androulidakis, G. Skandalis, \emph{The holonomy groupoid of a singular foliation}, Journal f\"ur die reine und angewandte Mathematik (Crelles Journal), 2009(626), 1-37.
\bibitem{az} I. Androulidakis, M. Zambon, \emph{Integration of Singular Subalgebroids}, arXiv preprint arXiv:2008.07976 (2020).
\bibitem{bigonnet} B. Bigonnet, \emph{Holonomie et graphe de certains feuilletages avec singularit\'es}, Diss. Toulouse, 1986.
\bibitem{chenetal} B. Chen, A.-M. Li, B.-L. Wang, \emph{Gluing principle for orbifold stratified spaces}, Geometry and Topology of Manifolds: 10th China-Japan Conference 2014, Springer Japan, 2016.
\bibitem{csw} J. D. Christensen, J. G. Sinnamon, E. Wu, \emph{The D-topology for diffeological spaces}, Pacific J. Math., 272(1):87--110, 2014.
\bibitem{debord} C. Debord, \emph{Holonomy Groupoids of singular Foliations}, J. Differential Geometry, 58 (2001), 467-500. 
\bibitem{ehresmann} C. Ehresmann, \emph{Cat\'egories et structures}, Dunod, Paris, 1965.
\bibitem{gw} J. F. Glazebrook, R. Wallace, \emph{Rate Distortion Manifolds as Model Spaces for Cognitive Information}, Informatica (03505596) 33.3 (2009).
\bibitem{i} P. Iglesias-Zemmour, \emph{Diffeology}, Vol. 185. American Mathematical Soc., 2013.
\bibitem{losik} M. V. Losik, \emph{Fr\'echet manifolds as diffeologic spaces}, Russian Math. 36 (1992): 31-37.
\bibitem{l} R. Louis, \emph{Nash resolutions of singular foliations with a view towards affine varieties}, arXiv preprint arXiv:2301.08706 (2023).
\bibitem{meersseman} L. Meersseman, \emph{The Teichm\"uller and Riemann moduli stacks},  Journal de l’\'Ecole polytechnique—Math\'ematiques 6 (2019): 879-945.
\bibitem{mz} R. Meyer, C. Zhu, \emph{Groupoids in categories with pretopology}, Theory \& Applications of Categories, 2015, 30. Jg., Nr. 55. 
\bibitem{mm} I. Moerdijk, J. Mr\v{c}un, \emph{Introduction to foliations and Lie groupoids}, 91. Cambridge Studies in Advanced Mathematics. Cambridge University Press, 2003.
\bibitem{pradines} J. Pradines, \emph{How to define the differentiable graph of a singular foliation}, Cahiers Topologie G\'eom. Diff\'erentielle Cat\'eg.  26  (1985), no. 4, 339--380. 
\bibitem{roberts} D. M. Roberts, \emph{Internal categories, anafunctors and localisations}, Theory and Applications of Categories 26 (2012), 788-829.
\bibitem{sw} U. Schreiber, K. Waldorf, \emph{Smooth functors vs. differential forms}, Homology, Homotopy and Applications 13, no. 1 (2011): 143-203.
\bibitem{winkelnkemper} H.E. Winkelnkemper, \emph{The graph of a foliation}, Ann. Glob. Analysis and Geometry 1(3) (1983) 51-75.
\bibitem{wockel} C. Wockel, \emph{Infinite-dimensional and higher structures in differential geometry}, Lecture notes for a course given at the University of Hamburg (2013).
\end{thebibliography}
\end{document}